\newtheorem{theorem}{Theorem}[section]
\newtheorem{corollary}{Corollary}[theorem]
\newtheorem{proposition}{Proposition}[section]
\theoremstyle{remark}
\newtheorem*{remark}{Remark}
\newtheorem{conjecture}{Conjecture}[section]
\theoremstyle{definition}
\title{Counting the number of group orbits by marrying the Burnside process with importance sampling}
\author{Persi Diaconis\thanks{Department of Mathematics and Department of Statistics, Stanford University}\and Chenyang Zhong\thanks{Department of Statistics, Columbia University}}
\date{\today}
\begin{document}
\maketitle
\makeatletter
\renewcommand{\@makefnmark}{}
\footnotetext{Authors are listed alphabetically}
\makeatother
\begin{abstract}
This paper introduces a novel and general algorithm for approximately counting the number of orbits under group actions. The method is based on combining the Burnside process and importance sampling. Specializing to unitriangular groups yields an efficient algorithm for estimating the number of conjugacy classes of such groups.
\end{abstract}
\noindent\textbf{Keywords:} Burnside process, importance sampling, orbit enumeration, unitriangular groups, conjugacy classes, pattern groups

\section{Introduction}\label{Sect.1}

The problem of enumerating unlabeled objects has garnered significant interest across various fields including mathematics, statistics, and computer science. Such enumeration problems can be formulated in terms of P\'olya theory. Concretely, let $\mathcal{X}$ be a finite set and $G$ a finite group acting on $\mathcal{X}$. This group action splits $\mathcal{X}$ into orbits, which represent unlabeled objects. Understanding the orbits under group actions is a huge, unfocused problem with many interesting special cases. See the survey \cite{KellerOrbits}. P\'olya theory \cite{PR} gives an exact formula for counting the number of group orbits.

In practice, however, the computational complexity of exactly counting the number of group orbits based on P\'olya theory can be huge when the cardinalities of $\mathcal{X}$ and $G$ are large. This is connected to the computational P\'olya theory developed by Jerrum and Goldberg \cite{Gol, Jer, GJ2}. In particular, they showed that for many natural problem instances, exactly counting the number of orbits is \#P-complete and hence is hard. Therefore, we have to resort to \emph{approximately} counting the number of orbits.

This paper introduces a novel and general algorithm for approximately counting the number of group orbits. The algorithm is based on combining the Burnside process and importance sampling. The Burnside process, introduced by Jerrum \cite{Jer2}, is a Markov chain whose stationary distribution is given by the uniform distribution on the set of group orbits. Running the Burnside process for long enough time gives an approximately uniform sample from the set of orbits. Importance sampling \cite{CD} is a widely used technique for approximating the expectation of a function with respect to a probability distribution $\nu$ given samples from another distribution $\mu$. Section \ref{Sect.2} below gives background materials on the Burnside process and importance sampling. Besides these two ingredients, the algorithm also makes use of a nested sequence of sets and groups. 

A primary application of our general counting algorithm is to count the number of conjugacy classes of unitriangular groups. For $n\in \mathbb{N}^{*}=\{1,2,\cdots\}$ and $q$ a prime power (namely, $q=p^{l}$ for some prime number $p$ and $l\in\mathbb{N}^{*}$), the unitriangular group $U_n(\mathbb{F}_q)$ consists of $n\times n$ upper triangular matrices over the finite field $\mathbb{F}_q$ with ones on the diagonal. It is the Sylow $p$-subgroup of the general linear group $GL_n(\mathbb{F}_q)$. There has been significant interest in the number of conjugacy classes of $U_n(\mathbb{F}_q)$ (denoted by $k(U_n(\mathbb{F}_q))$ herafter) within the group theory and combinatorics communities. In \cite{Hig}, Higman made the following famous conjecture:
\begin{conjecture}
For every $n\in \mathbb{N}^{*}$, $k(U_n(\mathbb{F}_q))$ is a polynomial in $q$.
\end{conjecture}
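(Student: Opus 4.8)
This is \emph{Higman's conjecture}, posed in \cite{Hig} in 1960 and still open; so rather than a proof I can only lay out the most promising line of attack and indicate where every previous attempt has stalled. There are two broad strategies in the literature. The first is an explicit combinatorial stratification of the conjugacy classes, pioneered by Vera-L\'opez and Arregi and refined by Goodwin, Mosch and R\"ohrle: one partitions $U_n(\mathbb{F}_q)$ into ``canonical families'' indexed by certain admissible sets of matrix positions, shows that the number of classes meeting each family is a product of factors $q-1$ and $q$, and sums. This has verified the conjecture for all $n\le 13$ but gives no handle on general $n$. The second strategy is geometric: realize $k(U_n(\mathbb{F}_q))$ as (up to an explicit correction) the number of $\mathbb{F}_q$-points of a scheme $Y_n$ defined over $\mathbb{Z}$ and prove that $Y_n$ is of \emph{polynomial count}, meaning $\#Y_n(\mathbb{F}_q)$ coincides for every prime power $q$ with one fixed polynomial; by a theorem of Katz this is exactly what is needed.

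The plan I would follow is the geometric one, routed through the Kirillov orbit method for algebra groups. When $p\ge n$ the truncated exponential identifies $U_n(\mathbb{F}_q)$ with the nilpotent Lie algebra $\mathfrak{u}_n(\mathbb{F}_q)$ and, by Kirillov theory, conjugacy classes of $U_n(\mathbb{F}_q)$ are in bijection with coadjoint orbits of $U_n(\mathbb{F}_q)$ on the dual $\mathfrak{u}_n(\mathbb{F}_q)^{*}$. First I would set up this coadjoint action scheme-theoretically over $\mathbb{Z}[1/N]$ for a suitable integer $N$, and stratify $\mathfrak{u}_n^{*}$ by the isomorphism type of the radical of the alternating form $B_\ell(x,y)=\ell([x,y])$ attached to a functional $\ell$; on each stratum the stabilizer has constant dimension, so the orbits are the $\mathbb{F}_q$-points of an explicit affine quotient. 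The crux of the argument would then be to show that each of these quotients admits a paving by affine spaces (equivalently, is an iterated torus-times-affine-space bundle), hence is polynomial count, and that the finitely many strata recombine, with the inclusion--exclusion signs, into a single polynomial in $q$. Finally one must remove the hypothesis $p\ge n$ by a separate small-characteristic argument; here Evseev's reduction of Higman's conjecture to a statement about smaller algebra groups, and the fact (from the supercharacter theory of Andr\'e, Yan, Diaconis and Isaacs) that the number of supercharacters of $U_n(\mathbb{F}_q)$ is already a polynomial in $q$ and a lower bound of the correct order, are the natural tools and sanity checks.

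The hard part --- and the reason the conjecture has resisted proof for more than sixty years --- is uniform control in $n$. In every known stratification the number of strata grows faster than any polynomial in $n$, so although each piece is individually polynomial in $q$ there is no a priori bound on the degree, no control of the cancellation between strata, and no inductive scheme on $n$ that closes. The obstacle is sharpened by a theorem of Halasi and P\'alfy: the obvious generalization of Higman's conjecture to arbitrary \emph{pattern groups} (subgroups of $U_n$ cut out by a set of positions) is \emph{false} --- there are pattern groups whose number of conjugacy classes is not a polynomial in $q$. Consequently any successful proof must exploit some global feature of the full group $U_n$ that fails for general patterns, which rules out exactly the local, position-by-position inductions that one would first try; this is where I expect the proposed argument to break down.

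In keeping with the subject of this paper, a complementary and entirely achievable goal is to run the Burnside-plus-importance-sampling estimator developed in the later sections to obtain high-confidence numerical values of $k(U_n(\mathbb{F}_q))$ for several prime powers $q$ and for $n$ past the current exact record, then fit the results to a polynomial in $q$. This would pin down the conjectural degree and leading coefficient --- or, in principle, refute the conjecture --- and is precisely the kind of evidence that has historically guided and constrained attacks on it. It is, of course, evidence and not proof.
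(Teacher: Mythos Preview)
You have correctly identified that this statement is Higman's conjecture and remains open; the paper does not prove it either. It is stated in Section~\ref{Sect.1} purely as background and motivation, with a pointer to \cite{pak2015higman} for its current status. There is therefore no ``paper's own proof'' to compare against.

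Your survey of the two main strategies (combinatorial stratification \`a la Vera-L\'opez--Arregi and the geometric/Kirillov route) and of the Halasi--P\'alfy obstruction for pattern groups is accurate and well informed, but it goes well beyond anything the paper attempts; the paper's only engagement with the conjecture is numerical. Your final paragraph is exactly in line with what the paper actually does: Section~\ref{Sect.4.4} runs the Burnside-plus-importance-sampling estimator for $q\in\{2,3\}$ and $n\le 32$, compares against the exact values known for $n\le 16$, and observes that $\log_q(k(U_n(\mathbb{F}_q)))/n^2$ appears to approach $1/12$, consistent with the refined conjecture that $k(U_n(\mathbb{F}_q))$ is a polynomial in $q$ of degree $\lfloor n(n+6)/12\rfloor$.

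In short: there is no gap to name because there is no proof on either side; your write-up is an honest account of an open problem, and its concluding suggestion matches the paper's actual contribution.
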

\noindent See \cite{pak2015higman} for a report of the current status of this conjecture. Higman \cite{Hig} also showed the following bound on $k(U_n(\mathbb{F}_q))$:
\begin{equation}\label{Bound_Higman}
    q^{\frac{n^2}{12}(1+o_n(1))}\leq k(U_n(\mathbb{F}_q)) \leq q^{\frac{n^2}{4}(1+o_n(1))},
\end{equation}
where $o_n(1)$ denotes a function of $n$ that is independent of $q$ and approaches $0$ as $n \rightarrow \infty$. Improvements to Higman's bound were obtained in \cite{vera2003conjugacy,marberg2008constructing,soffer2016upper,soffer2016combinatorics}, and Soffer \cite{soffer2016upper, soffer2016combinatorics} conjectured that the lower bound in (\ref{Bound_Higman}) is sharp. See also \cite{DT,marberg2011combinatorial,pak2015higman} for further results related to conjugacy classes of $U_n(\mathbb{F}_q)$. 

In light of those developments, it is important to calculate $k(U_n(\mathbb{F}_q))$ for concrete values of $n$ and $q$. However, this remains a challenging task given the super-exponential growth of $k(U_n(\mathbb{F}_q))$ indicated by (\ref{Bound_Higman}). The currently best-known exact values of $k(U_n(\mathbb{F}_q))$ are only up to $n\leq 16$ \cite{pak2015higman,soffer2016combinatorics}. Our counting algorithm, when specialized to the case where $\mathcal{X}=G=U_n(\mathbb{F}_q)$ and $G$ acts on itself by conjugation, provides an efficient method to estimate $k(U_n(\mathbb{F}_q))$. It gives estimated values of $k(U_n(\mathbb{F}_q))$ for $q=2,3$ up to $n\leq 32$, and the estimated values for $n\leq 16$ match the known exact values closely (see Section \ref{Sect.4.4} for details). 

The rest of this paper is organized as follows. Section \ref{Sect.2} reviews the background on the Burnside process and importance sampling. The general orbit counting algorithm is introduced in Section \ref{Sect.3}. Section \ref{Sect.4} discusses the application of the algorithm to count the number of conjugacy classes of unitriangular groups. Section \ref{Sect.5} concludes with final remarks.

\section{Background}\label{Sect.2}

This section presents background materials on the Burnside process and importance sampling. 

\subsection{Burnside process}\label{Sect.2.1}

Let $\mathcal{X}$ be a finite set, and let $G$ be a finite group that acts on $\mathcal{X}$. For any $g\in G$ and $x\in\mathcal{X}$, define
\begin{equation*}
    \mathcal{X}_g:=\{y\in\mathcal{X}: y^g =y\}, \quad G_x:=\{h\in G: x^h=x\}.
\end{equation*}
The Burnside process, introduced by Jerrum \cite{Jer2}, is a Markov chain on $\mathcal{X}$ where each iteration can be described as follows:
\begin{itemize}
    \item From $x\in \mathcal{X}$, sample $g\in G_x$ uniformly at random.
    \item Given $g$, sample $y\in \mathcal{X}_g$ uniformly at random. Then move to the new state $y$. 
\end{itemize}
Thus the transition matrix of the Burnside process is given by
\begin{equation*}
    K(x,y)=\sum_{g\in   G_x\cap G_y }\frac{1}{|\mathcal{X}_g|}\frac{1}{|G_x|}, \quad \text{for all }x,y\in\mathcal{X}.
\end{equation*}
For any $x\in\mathcal{X}$, denote by $O(x)$ the orbit containing $x$. It can be checked that the Burnside process is an ergodic, reversible Markov chain with stationary distribution
\begin{equation*}
    \pi(x)= \frac{z^{-1}}{|O(x)|}, \quad \text{for all }x\in\mathcal{X}, \quad \text{with } z=\#\text{orbits}.
\end{equation*}

By Dynkin's criterion (see \cite{KS}), the Burnside process can be lumped onto orbits. The stationary distribution of the lumped chain is the uniform distribution on the set of orbits. Thus the Burnside process provides a method for obtaining approximately uniform samples from the set of orbits.

There has been much recent interest on the rates of convergence of the Burnside process. We refer to \cite{Jer2,GJ,AF,D,Chen,dittmer2019counting,DZ,R,Pag} for relevant results in the literature. 

\subsection{Importance sampling}\label{Sect.2.2}

Let $\mu$ and $\nu$ be two probability measures on a set $\mathcal{X}$ such that $\nu$ is absolutely continuous with respect to $\mu$, and let $f$ be a measurable real-valued function on $\mathcal{X}$. Suppose that independent random values $X_1,X_2, \cdots,X_n$ are drawn from $\mu$. Importance sampling aims to estimate the integral $I(f)=\int_{\mathcal{X}} fd\nu$ based on $X_1,X_2,\cdots,X_n$. 

Let $\phi$ be the Radon-Nikodym derivative of $\nu$ with respect to $\mu$. Importance sampling estimates $I(f)$ by
\begin{equation*}
    \widehat{I(f)}=\frac{1}{n}\sum_{i=1}^n f(X_i)\phi(X_i).
\end{equation*}
It can be shown that 
\begin{equation*}
    \mathbb{E}\big[\widehat{I(f)}\big]=I(f).
\end{equation*}
Therefore, when the sample size $n$ is sufficiently large, $\widehat{I(f)}$ gives a precise estimate of $I(f)$.

We refer the reader to \cite{HH,Sri,CMR,Liu} for detailed accounts of importance sampling. The reader is also referred to \cite{CD} for a recent work on the sample size required in importance sampling. For the counting algorithm introduced in Section \ref{Sect.3}, the random values $X_1,X_2,\cdots,X_n$ are samples from an ergodic Markov chain. Performance guarantees for importance sampling in this context are discussed in \cite{madras1999importance}; see also \cite{robert2004monte, Liu}.

\section{General algorithm for orbit counting}\label{Sect.3}

This section gives the general algorithm for approximately counting the number of orbits under group actions. Assume that a finite group $G$ acts on a finite set $\mathcal{X}$. Denote by $O_1,\cdots,O_{k(\mathcal{X},G)}$ the orbits under this group action, where $k(\mathcal{X},G)$ is the number of such orbits. The aim is to approximately count the number of orbits $k(\mathcal{X},G)$. The algorithm is an extension of the basic tools of computational complexity relating counting and approximate sampling \cite{broder1986hard,jerrum1986random,SJ}.

To proceed, make the following assumptions on $\mathcal{X}$ and $G$:
\begin{itemize}
    \item There is a sequence of finite sets $\{\mathcal{X}_i\}_{i=1}^N$ with $\mathcal{X}_N=\mathcal{X}$ and $|\mathcal{X}_1|=1$, together with a sequence of finite groups $\{G_i\}_{i=1}^N$ with $G_N=G$ and $G_1$ being the trivial group, such that the group $G_i$ acts on the set $\mathcal{X}_i$. Moreover, there is a sequence of maps $\{\phi_i\}_{i=1}^{N-1}$ such that $\phi_i$ is a surjection from $\mathcal{X}_{i+1}$ to $\mathcal{X}_i$.
    \item There is an efficient algorithm which for a given element $x\in \mathcal{X}_i$ returns the size of the stabilizer of $x$ in $G_i$, for every $1\leq i\leq N$.
\end{itemize}
The assumptions are easily seen to be satisfied in many natural examples (see Section \ref{Sect.4} for an example on unitriangular groups). Note that, while $\mathcal{X}_i \subseteq \mathcal{X}$ is usually assumed (where $1\leq i\leq N$), $\mathcal{X}_i$ can be more general and does not have to be a subset of $\mathcal{X}$. In practice, it is required that the ratios $\frac{|\mathcal{X}_{i+1}|}{|\mathcal{X}_i|}$ and $\frac{|G_{i+1}|}{|G_i|}$ are not too large for every $1\leq i\leq N-1$.

In what follows, for any $1\leq i\leq N$ and $x\in \mathcal{X}_i$, denote by $O_i(x)$ the orbit (under the group action of $G_i$) in $\mathcal{X}_i$ that contains $x$, and by $Stab_i(x)$ the stabilizer of $x$ in the group $G_i$. The number of orbits of $\mathcal{X}_i$ under the group action of $G_i$ is denoted by $k(\mathcal{X}_i,G_i)$.

The first idea of the orbit counting algorithm is to split the (possibly very large) quantity $k(\mathcal{X},G)$ into the product of $N-1$ ratios (note that $|\mathcal{X}_1|=1$):
\begin{equation}\label{EQ_1}
    k(\mathcal{X},G)=\prod_{i=1}^{N-1}\frac{k(\mathcal{X}_{i+1},G_{i+1})}{k(\mathcal{X}_i,G_i)}.
\end{equation}
In order to estimate $k(\mathcal{X},G)$ precisely, it suffices to estimate the ratios $\frac{k(\mathcal{X}_{i+1},G_{i+1})}{k(\mathcal{X}_i,G_i)}$ for every $1\leq i\leq N-1$. 

In order to estimate the individual ratios $\frac{k(\mathcal{X}_{i+1},G_{i+1})}{k(\mathcal{X}_i,G_i)}$ for $1\leq i\leq N-1$, we propose a method that combines the Burnside process with importance sampling. The importance sampling step is based on the following proposition:

\begin{proposition}\label{Le1}
Suppose that $1 \leq i\leq N-1$. Let $T_{i+1}$ be a random element of the set $\mathcal{X}_{i+1}$ such that
\begin{equation*}
    \mathbb{P}(T_{i+1}=x)  = \frac{1}{k(\mathcal{X}_{i+1},G_{i+1})}\cdot\frac{1}{|O_{i+1}(x)|}, \quad \text{for all } x\in \mathcal{X}_{i+1}.
\end{equation*}
Then 
\begin{equation*}
    \frac{|G_{i+1}|}{|G_i|}\cdot\mathbb{E}\bigg[\frac{|Stab_{i}(\phi_{i}(T_{i+1}))|}{|Stab_{i+1}(T_{i+1})||\phi_{i}^{-1}(\phi_i(T_{i+1}))|}\bigg]=\frac{k(\mathcal{X}_i,G_i)}{k(\mathcal{X}_{i+1},G_{i+1})}.
\end{equation*}
\end{proposition}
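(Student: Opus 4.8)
The plan is to compute the expectation on the left-hand side directly from the definition of the law of $T_{i+1}$, by expanding the sum over $\mathcal{X}_{i+1}$, and to recognize the result as a Burnside-type (orbit-counting) identity applied to $G_i$ acting on $\mathcal{X}_i$. First I would write
\[
\mathbb{E}\bigg[\frac{|Stab_{i}(\phi_{i}(T_{i+1}))|}{|Stab_{i+1}(T_{i+1})|\,|\phi_{i}^{-1}(\phi_i(T_{i+1}))|}\bigg]
=\frac{1}{k(\mathcal{X}_{i+1},G_{i+1})}\sum_{x\in\mathcal{X}_{i+1}}\frac{1}{|O_{i+1}(x)|}\cdot\frac{|Stab_{i}(\phi_{i}(x))|}{|Stab_{i+1}(x)|\,|\phi_{i}^{-1}(\phi_i(x))|}.
\]
By the orbit–stabilizer theorem, $|O_{i+1}(x)|=|G_{i+1}|/|Stab_{i+1}(x)|$, so $\frac{1}{|O_{i+1}(x)|\,|Stab_{i+1}(x)|}=\frac{1}{|G_{i+1}|}$, and the summand collapses to $\frac{1}{|G_{i+1}|}\cdot\frac{|Stab_{i}(\phi_i(x))|}{|\phi_i^{-1}(\phi_i(x))|}$. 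This cancels the prefactor $|G_{i+1}|/|G_i|$ and leaves
\[
\frac{1}{k(\mathcal{X}_{i+1},G_{i+1})}\cdot\frac{1}{|G_i|}\sum_{x\in\mathcal{X}_{i+1}}\frac{|Stab_{i}(\phi_i(x))|}{|\phi_i^{-1}(\phi_i(x))|}.
\]

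The next step is to reorganize the sum over $x\in\mathcal{X}_{i+1}$ by grouping the fibers of $\phi_i$: since $\phi_i$ is a surjection onto $\mathcal{X}_i$, each $y\in\mathcal{X}_i$ contributes exactly $|\phi_i^{-1}(y)|$ terms, each equal to $|Stab_i(y)|/|\phi_i^{-1}(y)|$, so the fiber over $y$ contributes precisely $|Stab_i(y)|$. Hence the sum equals $\sum_{y\in\mathcal{X}_i}|Stab_i(y)|$. Now I invoke the Burnside/Cauchy–Frobenius counting identity in the form $\sum_{y\in\mathcal{X}_i}|Stab_i(y)|=\sum_{y\in\mathcal{X}_i}\frac{|G_i|}{|O_i(y)|}=|G_i|\cdot k(\mathcal{X}_i,G_i)$, the last equality because $\sum_{y\in\mathcal{X}_i}1/|O_i(y)|$ counts each orbit once. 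Substituting back, the $|G_i|$ cancels and we obtain exactly $\frac{k(\mathcal{X}_i,G_i)}{k(\mathcal{X}_{i+1},G_{i+1})}$, as claimed.

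The computation is essentially a bookkeeping exercise once the right groupings are chosen, so I do not expect a serious obstacle; the only point requiring care is the fiber reorganization — making sure that the weights $|Stab_i(\phi_i(x))|$ and the fiber-size denominators $|\phi_i^{-1}(\phi_i(x))|$ are used consistently, and that the surjectivity of $\phi_i$ (and nothing more — in particular no compatibility between the $G_i$-actions and the $\phi_i$) is all that is needed. It is worth flagging explicitly that the identity holds for an \emph{arbitrary} surjection $\phi_i$; this is what makes the denominator $|\phi_i^{-1}(\phi_i(T_{i+1}))|$ appear, and it is the mechanism by which the telescoping product \eqref{EQ_1} becomes estimable.
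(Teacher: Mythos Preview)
Your proof is correct and essentially identical to the paper's: both expand the expectation, apply the orbit--stabilizer theorem to cancel the $|O_{i+1}(x)|\,|Stab_{i+1}(x)|$ factor, group the sum over $\mathcal{X}_{i+1}$ by fibers of the surjection $\phi_i$, and finish with $\sum_{y\in\mathcal{X}_i}1/|O_i(y)|=k(\mathcal{X}_i,G_i)$. The only cosmetic difference is that the paper converts both stabilizers to orbit sizes at the outset, whereas you cancel $|G_{i+1}|$ first and invoke orbit--stabilizer on $G_i$ at the end; your added remark that only surjectivity of $\phi_i$ (and no equivariance) is needed is accurate and worth keeping.
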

\begin{proof}
By the orbit-stabilizer theorem, for any $x\in \mathcal{X}_{i+1}$,
\begin{equation*}
    \frac{|G_{i+1}|}{|Stab_{i+1}(x)|}=|O_{i+1}(x)|,\quad  \frac{|G_{i}|}{|Stab_{i}(\phi_i(x))|}=|O_{i}(\phi_i(x))|.
\end{equation*}
Therefore,
\begin{eqnarray*}
&& \frac{|G_{i+1}|}{|G_i|}\cdot\mathbb{E}\bigg[\frac{|Stab_{i}(\phi_i(T_{i+1}))|}{|Stab_{i+1}(T_{i+1})||\phi_i^{-1}(\phi_i(T_{i+1}))|}\bigg]\\
&=& \frac{1}{k(\mathcal{X}_{i+1},G_{i+1})}\sum_{x\in \mathcal{X}_{i+1}}\frac{|Stab_i(\phi_i(x))||G_{i+1}|}{|G_i||Stab_{i+1}(x)||\phi_i^{-1}(\phi_i(x))|} \cdot \frac{1}{|O_{i+1}(x)|}\\
&=& \frac{1}{k(\mathcal{X}_{i+1},G_{i+1})}\sum_{x\in \mathcal{X}_{i+1}}\frac{|O_{i+1}(x)|}{|O_{i}(\phi_i(x))||\phi_i^{-1}(\phi_i(x))|}\cdot \frac{1}{|O_{i+1}(x)|}\\
&=& \frac{1}{k(\mathcal{X}_{i+1},G_{i+1})}\sum_{y\in \mathcal{X}_{i}}\frac{1}{|O_{i}(y)||\phi_i^{-1}(y)|}\cdot |\phi_i^{-1}(y)|\\
&=&\frac{k(\mathcal{X}_{i},G_{i})}{k(\mathcal{X}_{i+1},G_{i+1})},
\end{eqnarray*}
where the third equality uses the fact that $\phi_i$ is surjective.
\end{proof}

Now note that the distribution of $T_{i+1}$ in Proposition \ref{Le1} is the stationary distribution of the Burnside process for the set $\mathcal{X}_{i+1}$ and the group $G_{i+1}$. To estimate $\frac{k(\mathcal{X}_i,G_i)}{k(\mathcal{X}_{i+1},G_{i+1})}$ (where $1\leq i\leq N-1$), run this Burnside process for $B_i+N_i$ steps, where the first $B_i$ samples are burn-in samples. Suppose that the samples obtained are $\{M_{i,j}\}_{j=1}^{B_i+N_i}$. Using Proposition \ref{Le1}, estimate $\frac{k(\mathcal{X}_i,G_i)}{k(\mathcal{X}_{i+1},G_{i+1})}$ by
\begin{equation}
    \widehat{E_i}:=\frac{|G_{i+1}|}{|G_{i}|}\cdot\frac{1}{N_i}\sum_{j=B_i+1}^{B_i+N_i}\frac{|Stab_{i}(\phi_i(M_{i,j}))|}{|Stab_{i+1}(M_{i,j})||\phi_i^{-1}(\phi_i(M_{i,j}))|}.
\end{equation}
Finally, based on (\ref{EQ_1}), estimate $k(\mathcal{X},G)$ by 
\begin{equation}\label{Eq5}
    \widehat{k(\mathcal{X},G)}:=\prod_{i=1}^{N-1}\frac{1}{\widehat{E_i}}.
\end{equation}
A summary of the orbit counting algorithm is presented in Algorithm \ref{alg1}.

\begin{algorithm}
\caption{General algorithm for orbit counting}
\label{alg1}
\begin{description}
\item For $1\leq i\leq N-1$, we do the following:
\subitem Run the Burnside process for the set $\mathcal{X}_{i+1}$ and the group $G_{i+1}$ for $B_i+N_i$ steps. Suppose the samples obtained are $\{M_{i,j}\}_{j=1}^{B_i+N_i}$.
\subitem Estimate $\frac{k(\mathcal{X}_i,G_i)}{k(\mathcal{X}_{i+1},G_{i+1})}$ by 
\begin{equation*}
    \widehat{E_i}:=\frac{|G_{i+1}|}{|G_{i}|}\cdot\frac{1}{N_i}\sum_{j=B_i+1}^{B_i+N_i}\frac{|Stab_{i}(\phi_i(M_{i,j}))|}{|Stab_{i+1}(M_{i,j})||\phi_i^{-1}(\phi_i(M_{i,j}))|}.
\end{equation*}
\item Finally, estimate $k(\mathcal{X},G)$ by
\begin{equation*}
    \widehat{k(\mathcal{X},G)}:=\prod_{i=1}^{N-1}\frac{1}{\widehat{E_i}}.
\end{equation*}
\end{description}
\end{algorithm}

\paragraph{A simple example.} Consider the symmetric group $S_n$ acting on the set $\mathcal{X}_n=C_k^n=\text{all }k^n\text{ }n\text{-tuples taking values in }[k]$ (where $k\geq 2$) by permuting coordinates. For any $x\in\mathcal{X}_n$ and $l\in [k]$, define $N_l(x):=|\{j\in [n]: x_j=l\}|$. The orbits are
\begin{equation*}
    O_{i_1,\cdots,i_k}=\Big\{x\in\mathcal{X}_n: N_1(x)=i_1,\cdots,N_k(x)=i_k\Big\}
\end{equation*}
for all $(i_1,\cdots,i_k)\in \{0,1,2,\cdots\}^k$ such that $i_1+\cdots+i_k=n$. The Burnside process for this example is developed and carefully analyzed in \cite{D, DZ}. Let us use it to illustrate Algorithm \ref{alg1}.

Take $\mathcal{X}_i=C_k^i,G_i=S_i,1\leq i\leq n$ to be the sequence of sets and groups required by Algorithm \ref{alg1}. For each $1\leq i\leq n-1$, let $\phi_i:\mathcal{X}_{i+1}\rightarrow\mathcal{X}_i$ be 
\begin{equation*}
    \phi_i((x_1,x_2,\cdots,x_{i+1}))=(x_1,x_2,\cdots,x_i).
\end{equation*}
Thus $\frac{|\mathcal{X}_{i+1}|}{|\mathcal{X}_i|}=k$, $\frac{|S_{i+1}|}{|S_i|}=i+1$. For $x\in\mathcal{X}_{i+1}$, the statistic is 
\begin{equation*}
    \frac{|Stab_i(\phi_i(x))|(i+1)}{|Stab_{i+1}(x)||\phi_i^{-1}(\phi_i(x))|}.
\end{equation*}
From the definitions, 
\begin{equation*}
    |Stab_{i+1}(x)|=\prod_{l=1}^k N_l(x)!, \quad |\phi_i^{-1}(\phi_i(x))|=k,
\end{equation*}
\begin{equation*}
    |Stab_i(\phi_i(x))|= (N_{l_0}(x)-1)!\prod_{l\in [k]\backslash\{l_0\}}N_l(x)!, \quad \text{where }l_0=x_{i+1}.
\end{equation*}
Thus, the statistic is
\begin{equation*}
    \frac{i+1}{k|\{j\in [i+1]: x_j=x_{i+1}\}|}.
\end{equation*}
In \cite{D,DZ}, it is proved that for fixed $k\geq 2$, the Burnside process on $\mathcal{X}_n$, started at $(1,1,\cdots,1)$, converges in a bounded number of steps, independent of $n$.

As a reality check, we ran the counting algorithm to estimate the number of orbits for $S_n$ acting on $C_k^n$ with $n=20$ and $k\in\{2,\cdots,20\}$, setting $B_i=20$ and $N_i=10000$ for every $1\leq i\leq n-1$. The true number of orbits in this case is given by $\binom{n+k-1}{k-1}$. Figure \ref{Fig5} compares the logarithm of the true and estimated number of orbits. Additionally, Figure \ref{Fig6} presents a histogram of the logarithm of the estimated number of orbits when $k=10$, based on $100$ independent replications. These figures suggest that the counting algorithm accurately estimates the number of orbits under the current setup. The code for implementing the counting algorithm for this example is available at \url{https://github.com/cyzhong17/burnside_counting}.   
  
\begin{figure}[!h]
\centering
\includegraphics[width=0.7\textwidth]{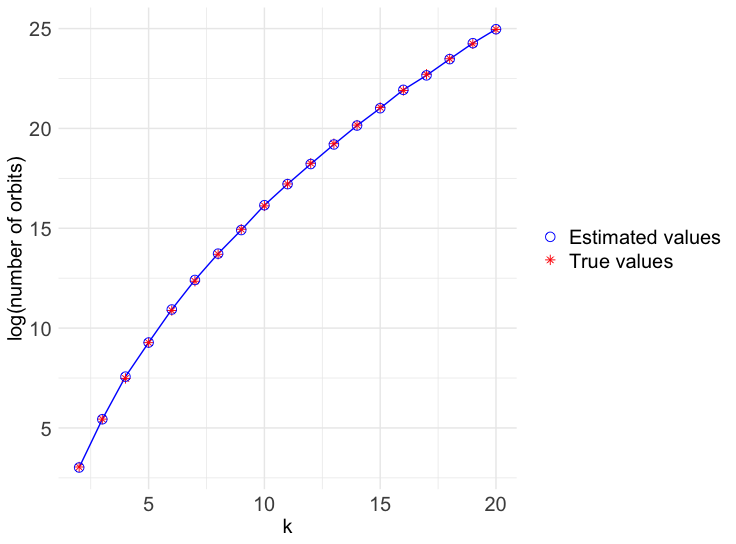}
\caption{Logarithm of true and estimated number of orbits}
\label{Fig5}
\end{figure}

\begin{figure}[!h]
\centering
\includegraphics[width=0.6\textwidth]{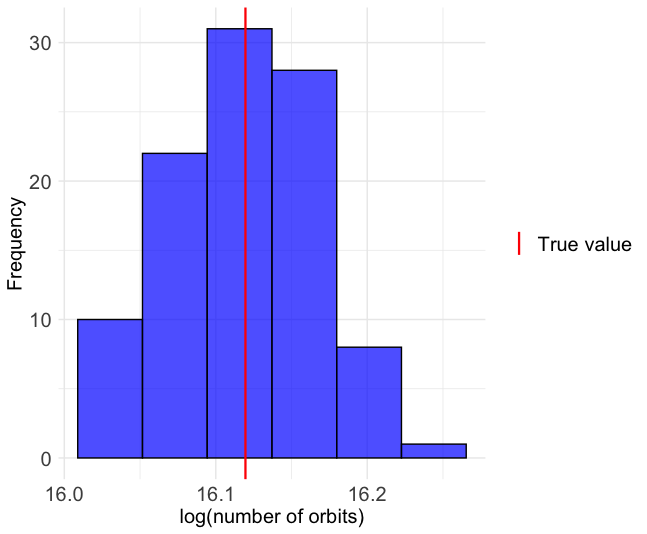}
\caption{Histogram of logarithm of estimated number of orbits for $k=10$}
\label{Fig6}
\end{figure}

\bigskip

Algorithm \ref{alg1} allows the flexibility of choosing the sequence of sets $\{\mathcal{X}_i\}_{i=1}^N$ and the sequence of groups $\{G_i\}_{i=1}^N$. Suitable choices of such sequences in concrete problems lead to efficient algorithms. Standard large deviations estimates show that for rapidly mixing Markov chains and practically bounded ratios, the estimate in (\ref{Eq5}) converges in a polynomial number of steps. See \cite{jerrum2003counting, sinclair2012algorithms} for background. There is a close connection between approximate counting and approximate sampling for a special class of problems called ``self-reducible problems'' \cite{SJ}. However, there is no known self-reducible structure for general orbit counting problems (including the counting problem discussed in Section \ref{Sect.4}). In the algorithm, the use of importance sampling is a crucial step for reducing approximate counting to approximate sampling (which is then done via the Burnside process).   

\section{Counting the number of conjugacy classes of unitriangular groups}\label{Sect.4}

This section considers the problem of approximately counting the number of conjugacy classes of the unitriangular group $U_n(\mathbb{F}_q)$. The general orbit counting algorithm is specialized to the setting where $\mathcal{X}=G=U_n(\mathbb{F}_q)$ and $G$ acts on itself by conjugation. A key idea in this specialized algorithm is to use pattern groups to construct the sequences of sets and groups. Throughout this section, for any finite group $G$, denote by $k(G)$ the number of conjugacy classes of $G$. 

A brief introduction to pattern groups is in Section \ref{Sect.4.1}. This also discusses the Burnside process for sampling uniformly from conjugacy classes of pattern groups. The counting algorithm for $U_n(\mathbb{F}_q)$ is introduced in Section \ref{Sect.4.2}. Section \ref{Sect.4.3} presents results (Theorem \ref{Thm2} and Corollary \ref{Cor4.1}) which show that the fluctuation in the importance sampling step of the algorithm is well-controlled. The proof of Theorem \ref{Thm2} is deferred to Section \ref{Sect.4.5}. Numerical results based on the algorithm are shown in Section \ref{Sect.4.4}, which demonstrate the accuracy and computational efficiency of the algorithm.

\subsection{Pattern groups and Burnside process}\label{Sect.4.1}

Pattern groups are certain subgroups of the unitriangular group $U_n(\mathbb{F}_q)$ which can be described as follows. We say that a set $J\subseteq \{(i,j): 1\leq i<j\leq n\}$ is \emph{closed} if it satisfies the property that $(i,j)\in J$ and $(j,k)\in J$ implies $(i,k)\in J$. For any closed set $J\subseteq \{(i,j): 1\leq i<j\leq n\}$, the pattern group $U_J$ is defined as
\begin{eqnarray*}
    U_J&:=&\{(u_{ij})\in U_n(\mathbb{F}_q): u_{ij}=0\text{ for every }  (i,j)\text{ such that }\nonumber\\
    &&\quad\quad\quad\quad\quad\quad\quad\quad\quad\quad 1\leq i<j\leq n\text{ and }(i,j)\notin J\}.
\end{eqnarray*}
The class of pattern groups covers many interesting subgroups of $U_n(\mathbb{F}_q)$; refer to \cite{DT} for a detailed account of pattern groups. These groups will form building blocks of the nested sequence in the counting algorithm in Section \ref{Sect.4.2}.

The Burnside process for sampling from conjugacy classes of the pattern group $U_J$ can be done by specializing the Burnside process to the setting where $\mathcal{X}=G=U_J$ and $G$ acts on itself by conjugation (see Section \ref{Sect.2.1}). For this case, the Burnside process simplifies to the following Markov chain: for each step, from $x\in U_J$, pick $y$ uniformly from the centralizer of $x$ in $U_J$ (defined as $C_{U_J}(x)=\{z\in U_J: zx=xz\}$), and move from $x$ to $y$.

For any pattern group $U_J$, the problem of picking $y$ uniformly from the centralizer $C_{U_J}(x)$ given $x\in U_J$ can be done efficiently: First note that for any $x,y\in U_J$, the condition $xy=yx$ is equivalent to 
\begin{equation*}
    (x-I_n)(y-I_n)=(y-I_n)(x-I_n),
\end{equation*}
where $I_n$ is the $n\times n$ identity matrix. Let
\begin{eqnarray*}
    \mathcal{G} &:=&\{g-I_n: g\in U_J\}\\
    &=&\{(u_{ij})\in \mathbb{F}_q^{n\times n}: u_{ij}=0 \text{ for every }(i,j)\text{ such that }\nonumber\\
    &&\quad\quad\quad\quad\quad\quad\quad\quad\quad\quad 1\leq i,j\leq n\text{ and }(i,j)  \notin J\},
\end{eqnarray*} 
and note that
\begin{equation*}
    \mathcal{V}_x:=\{y\in \mathcal{G}: y(x-I_n)=(x-I_n)y\}
\end{equation*}
is a linear space over $\mathbb{F}_q$. A basis of $\mathcal{V}_x$, denoted by $\epsilon_1,\epsilon_2,\cdots,\epsilon_d$ (where $d$ is the dimension of $\mathcal{V}_x$), can be efficiently computed using Gaussian elimination. Sample $\alpha_1,\alpha_2,\cdots,\alpha_d$ i.i.d. from the uniform distribution on $\mathbb{F}_q$. Then the random element 
\begin{equation*}
    y=I_n+\sum_{i=1}^d \alpha_i \epsilon_i
\end{equation*}
is uniformly distributed on $C_{U_J}(x)$. 

Hereafter, for any pattern group $U_J$ and any $x\in U_J$, let $O_{U_J}(x)$ denote the conjugacy class of $U_J$ that contains $x$. Note that once the dimension of $\mathcal{V}_x$ (denoted by $d$) is computed, by the orbit-stabilizer theorem, the sizes of $C_{U_J}(x)$ and $O_{U_J}(x)$ can be calculated as
\begin{equation}\label{Sizes}
    |C_{U_J}(x)|=q^d, \quad  |O_{U_J}(x)|=\frac{|U_J|}{|C_{U_J}(x)|}=q^{|J|-d}.
\end{equation}
Specializing to $U_n(\mathbb{F}_q)$, the Burnside process thus provides a method for visualizing the size distribution of conjugacy classes of $U_n(\mathbb{F}_q)$. This is achieved by collecting samples from the Burnside process, calculating the sizes of the conjugacy classes containing these samples using (\ref{Sizes}), and constructing a histogram of the conjugacy class sizes. 

\subsection{Counting algorithm}\label{Sect.4.2}

This subsection applies the general orbit counting algorithm introduced in Section \ref{Sect.3} to the problem of counting the number of conjugacy classes of unitriangular groups. The specialization involves a slight modification that helps to reduce the variance in the importance sampling step.

First set up the nested sequence: for every $(i,j)$ such that $1\leq i<j\leq n$, let $J_{i,j}$ be the set of $(k,l)$ (with $1\leq k< l\leq n$) such that either $k>i$, or $k=i$ and $l\geq j$. The resulting $N=\frac{(n-1)n}{2}$ pattern groups $\{U_{J_{i,j}}\}_{1\leq i<j\leq n}$ form a nested sequence 
\begin{equation}\label{Eq1}
    H_0\subseteq H_1 \subseteq  H_2 \subseteq \cdots \subseteq H_N=U_n(\mathbb{F}_q),
\end{equation}
where $H_0$ is the unit group consisting of the identity matrix, and $H_m=U_{J_{k_m,l_m}}$ for $m=1,2,\cdots, N$, where $k_m,l_m$ are such that 
\begin{equation*}
\frac{(n-k_m)(n-k_m-1)}{2}+n-l_m+1=m,\quad 1\leq k_m<l_m\leq n.
\end{equation*}
Note that for every $0\leq m\leq N$, $|H_m|=q^m$.

For example, when $n=4$ and $q=2$, the nested sequence of pattern groups is given by
\begin{equation*}
    \begin{bmatrix}
    1 & 0 &0 &0\\
    0&1&0 & 0\\
    0&0&1&0\\
    0&0&0&1
    \end{bmatrix},
    \begin{bmatrix}
    1 & 0 &0 &0\\
    0&1&0&0\\
    0&0&1&*\\
    0&0&0&1
    \end{bmatrix},
    \begin{bmatrix}
    1 & 0 &0 &0\\
    0&1&0&*\\
    0&0&1&*\\
    0&0&0&1
    \end{bmatrix},
    \begin{bmatrix}
    1 & 0 &0 &0\\
    0&1&* & *\\
    0&0&1&*\\
    0&0&0&1
    \end{bmatrix},
\end{equation*}
\begin{equation*}
    \begin{bmatrix}
    1 & 0 &0 &*\\
    0&1&*&*\\
    0&0&1&*\\
    0&0&0&1
    \end{bmatrix},
    \begin{bmatrix}
    1 & 0 &* &*\\
    0&1&*&*\\
    0&0&1&*\\
    0&0&0&1
    \end{bmatrix},
    \begin{bmatrix}
    1 &* &* &*\\
    0&1&*&*\\
    0&0&1&*\\
    0&0&0&1
    \end{bmatrix}.
\end{equation*}
Here, $*$ means either $0$ or $1$ can be taken in the particular position.

Next comes the importance sampling step: for every $1\leq m \leq N$, run the Burnside process on the pattern group $H_m$ (as described in Section \ref{Sect.4.1}) for $B_m+N_m$ steps. The first $B_m$ samples are burn-in samples. Suppose this gives the samples $M_{m,1}, M_{m,2},\cdots,  M_{m,B_m+N_m}$. 

Define the statistic $K_m$ on the group $H_m$ for $1\leq m\leq N$, which is a slight modification of the statistic used in the general algorithm, as follows. For every $g\in H_m$, if $g\notin H_{m-1}$, take $K_m(g)=0$; if $g\in H_{m-1}$, take $K_m(g)=\frac{|C_{H_{m-1}}(g)|}{|C_{H_m}(g)|}$. Note that both $|C_{H_{m-1}}(g)|$ and $|C_{H_m}(g)|$ can be efficiently computed based on (\ref{Sizes}). Then calculate the following estimate for $\frac{k(H_{m-1})}{k(H_m)}$:
\begin{equation}\label{Eq4}
    \hat{E}_m=\frac{q}{N_m}\sum_{j=B_m+1}^{B_m+N_m} K_m(M_{m,j}).
\end{equation}
Finally, use the following estimate of $k(U_n(\mathbb{F}_q))$:
\begin{equation}
    \widehat{k(U_n(\mathbb{F}_q))}=\frac{1}{\prod_{m=1}^N \hat{E}_m}.
\end{equation}

The validity of the algorithm is justified by the following proposition. Note that the distribution of $T_m$ is the stationary distribution of the Burnside process for sampling from conjugacy classes of $H_m$.

\begin{proposition}\label{Pro4.1}
Suppose that $1\leq m\leq N$. Let $T_m$ be a random element of the group $H_m$ such that
\begin{equation*}
    \mathbb{P}(T_m=g) =\frac{1}{k(H_m)} \frac{1}{|O_{H_m}(g)|}, \quad \text{for all } g\in H_m.
\end{equation*}
Then
\begin{equation*}
    q\mathbb{E}[K_m(T_m)]=\frac{k(H_{m-1})}{k(H_m)}.
\end{equation*}
\end{proposition}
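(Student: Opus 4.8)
The plan is to mimic the computation in the proof of Proposition~\ref{Le1}, specializing to the conjugation action and incorporating the modification that the statistic $K_m$ is supported on $H_{m-1}$. First I would write out the expectation explicitly: since $T_m$ has the Burnside stationary distribution on $H_m$,
\begin{equation*}
    q\,\mathbb{E}[K_m(T_m)] = \frac{q}{k(H_m)} \sum_{g \in H_m} \frac{K_m(g)}{|O_{H_m}(g)|} = \frac{q}{k(H_m)} \sum_{g \in H_{m-1}} \frac{1}{|O_{H_m}(g)|}\cdot\frac{|C_{H_{m-1}}(g)|}{|C_{H_m}(g)|},
\end{equation*}
where the second equality uses that $K_m(g)=0$ for $g \notin H_{m-1}$.

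Next I would invoke the orbit-stabilizer theorem in the form $|O_{H_m}(g)| = |H_m|/|C_{H_m}(g)|$, so that $1/(|O_{H_m}(g)|\,|C_{H_m}(g)|) = 1/|H_m| = q^{-m}$. Hence the summand collapses and
\begin{equation*}
    q\,\mathbb{E}[K_m(T_m)] = \frac{q}{k(H_m)} \cdot \frac{1}{q^m}\sum_{g \in H_{m-1}} |C_{H_{m-1}}(g)|.
\end{equation*}
Then I would apply the standard Burnside/Cauchy--Frobenius identity to the conjugation action of $H_{m-1}$ on itself: $\sum_{g \in H_{m-1}} |C_{H_{m-1}}(g)| = \sum_{g \in H_{m-1}} |\{h \in H_{m-1} : hg=gh\}|$, which counts commuting pairs and equals $|H_{m-1}|\cdot k(H_{m-1}) = q^{m-1} k(H_{m-1})$. (Equivalently, group the sum over conjugacy classes: each class $O$ contributes $|O|\cdot|C_{H_{m-1}}(g_O)| = |H_{m-1}|$.) Substituting gives $q\,\mathbb{E}[K_m(T_m)] = \frac{q}{k(H_m)}\cdot q^{-m}\cdot q^{m-1} k(H_{m-1}) = k(H_{m-1})/k(H_m)$, as claimed.

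I do not expect any serious obstacle here; the argument is a short bookkeeping computation. The one point that needs a sentence of care is the justification that $H_{m-1} \subseteq H_m$ so that $C_{H_{m-1}}(g)$ makes sense for $g \in H_{m-1}$ and the restriction $K_m$ is well-defined — this follows from the nested structure \eqref{Eq1} established in Section~\ref{Sect.4.2}, together with the fact that $|H_m| = q^m$ recorded there. A remark worth including afterwards is why this modified statistic is preferable to the general one of Proposition~\ref{Le1}: here $\phi_{m-1}$ can be taken as the natural ``zero out one entry'' surjection $H_m \to H_{m-1}$ with fibers of size $q$, and one checks $K_m$ agrees (up to the factor $|\phi_{m-1}^{-1}(\phi_{m-1}(g))| = q$ that is absorbed into the leading $q$) with the general statistic on $H_{m-1}$ while vanishing off $H_{m-1}$, which reduces the variance since the general statistic would otherwise take the value $1/(q\cdot|\phi^{-1}(\phi(g))| )$-type quantities on a larger support.
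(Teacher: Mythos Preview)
Your argument is correct and is essentially the paper's proof: both write out the expectation, restrict the sum to $H_{m-1}$, apply the orbit--stabilizer theorem to collapse $|O_{H_m}(g)|\,|C_{H_m}(g)|$ to $|H_m|$, and then use the identity $\sum_{g\in H_{m-1}}|C_{H_{m-1}}(g)|=|H_{m-1}|\,k(H_{m-1})$ (the paper phrases this last step equivalently as $\sum_{g\in H_{m-1}}1/|O_{H_{m-1}}(g)|=k(H_{m-1})$). The extra commentary about the relation to Proposition~\ref{Le1} and variance reduction is fine as motivation but is not needed for the proof itself.
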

\begin{proof}
By the orbit-stabilizer theorem, for any $x\in H_{m-1}$, 
\begin{equation}\label{orbitstab}
    |H_m|=|O_{H_m}(x)||C_{H_m}(x)|, \quad |H_{m-1}|=|O_{H_{m-1}}(x)||C_{H_{m-1}}(x)|.
\end{equation}
Hence
\begin{eqnarray*}
 q\mathbb{E}[K_m(T_m)]&=&
 \frac{q}{k(H_m)}\sum_{x\in H_{m-1}}\frac{|C_{H_{m-1}}(x)|}{|C_{H_m}(x)|}\frac{1}{|O_{H_m}(x)|}\\
 &=& \frac{1}{k(H_m)}\sum_{x\in H_{m-1}}\frac{1}{|O_{H_{m-1}}(x)|} = \frac{k(H_{m-1})}{k(H_m)}.
\end{eqnarray*}
\end{proof}

\subsection{Controlling the fluctuation for the importance sampling step}\label{Sect.4.3}

In importance sampling, it is important to control the fluctuation of the resulting estimate so that it gives a close approximation of the desired expectation \cite{Liu, CD}. For each pair of adjacent pattern groups $H_{m-1},H_m$ (where $1\leq m\leq N$) in the nested sequence (\ref{Eq1}), the following results demonstrate that the variance in the importance sampling step is well-controlled.
\begin{theorem}\label{Thm2}
For all $1\leq m\leq N$ and any prime power $q$,
\begin{equation*}
    q^{-1}\leq \frac{k(H_m)}{k(H_{m-1})}\leq q^3.
\end{equation*}
\end{theorem}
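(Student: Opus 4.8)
The plan is to relate $k(H_m)$ and $k(H_{m-1})$ by studying the fibers of the quotient/inclusion structure between the two pattern groups, which differ by a single root position. Since $|H_m| = q|H_{m-1}|$ and $H_{m-1} \subseteq H_m$, the index is $q$. I would set up a double-counting identity using Burnside's lemma: $k(H_m) = \frac{1}{|H_m|}\sum_{g\in H_m}|C_{H_m}(g)|$, and similarly for $H_{m-1}$. The key is that $H_{m-1}$ is a normal subgroup of $H_m$ (this should follow from the closed-set structure: the new position $(k_m,l_m)$ sits "on top," and conjugating an element of $H_{m-1}$ by an element of $H_m$ stays in $H_{m-1}$ — I would verify this carefully, as it underpins the clean ratio). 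Then $H_m/H_{m-1} \cong \mathbb{F}_q$ acts on the conjugacy classes of $H_{m-1}$, and $H_{m-1}$-conjugacy classes fuse and/or split when passing to $H_m$.

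The concrete steps I would carry out: (1) Establish $H_{m-1} \trianglelefteq H_m$. (2) For the lower bound $k(H_m)/k(H_{m-1}) \geq q^{-1}$: since each $H_m$-conjugacy class is a union of $H_{m-1}$-conjugacy classes, and an $H_m$-class meeting $H_{m-1}$ breaks into at most $[H_m:H_{m-1}] = q$ many $H_{m-1}$-classes, while $H_m$-classes not meeting $H_{m-1}$ contribute fresh classes; counting gives $k(H_{m-1}) \leq q\cdot k(H_m)$, i.e. the lower bound. Actually I would be careful: the cleaner route is $k(H_{m-1}) \le q \cdot k(H_m)$ because restricting the partition of $H_{m-1}$ into $H_m$-classes refines into $H_{m-1}$-classes with at most $q$ pieces each, using the orbit-counting bound on the $H_m/H_{m-1}$-action. (3) For the upper bound $k(H_m)/k(H_{m-1}) \leq q^3$: bound the number of $H_m$-classes by splitting into those contained in $H_{m-1}$ and those not. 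For classes inside $H_{m-1}$, each $H_m$-class is a union of $\geq 1$ of the $k(H_{m-1})$ classes, so there are at most $k(H_{m-1})$ of them. For classes outside $H_{m-1}$: these lie in the $q-1$ nontrivial cosets; I would use the Burnside-lemma expression over $H_m \setminus H_{m-1}$ together with a centralizer-size comparison. Here I expect to invoke a bound like $|C_{H_m}(g)| \le q \cdot |C_{H_{m-1}}(g')|$ for a suitable $g' \in H_{m-1}$, or directly bound the number of classes in each coset by $\min$ of coset-size considerations; the factor $q^3$ suggests losing one factor of $q$ from the index, one from centralizer comparison, and one from a crude bound on classes-per-coset.

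The main obstacle will be the upper bound, specifically controlling the conjugacy classes of $H_m$ that are \emph{not} contained in $H_{m-1}$. For these there is no direct comparison with classes of $H_{m-1}$, so I would need a genuine structural argument: writing $g = h\cdot t^{c}$ with $t$ a representative of the new root subgroup and $h \in H_{m-1}$, analyze how $H_{m-1}$-conjugation acts on the coset $H_{m-1}t^c$. The $H_m$-classes in this coset are in bijection with orbits of $H_{m-1}$ (acting by conjugation) on the coset, and counting these orbits via Burnside requires understanding fixed-point sets $|\{x \in H_{m-1}t^c : yxy^{-1} = x\}|$ for $y \in H_{m-1}$ — this is a linear-algebra computation over $\mathbb{F}_q$ once one unwinds the pattern-group commutator structure. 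I would aim to show the number of such orbits, summed over all $q-1$ nontrivial cosets, is at most (roughly) $q^3 k(H_{m-1}) - k(H_{m-1})$ or simply $O(q^3 k(H_{m-1}))$ with the stated constant; a clean way is to note each such orbit has size a power of $q$ and size at least $|H_{m-1}|/q^{?}$ relative to a centralizer bound, then divide $|H_m \setminus H_{m-1}| = (q-1)q^{m-1}$ by the minimal orbit size and compare with $k(H_{m-1}) \ge q^{m-1}/\max|C|$. I would double-check the exponent bookkeeping against the small case $n=4$, $q=2$ listed in the paper to make sure $q^{-1}$ and $q^3$ are not off by a power of $q$.
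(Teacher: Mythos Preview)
Your framework via normality is sound: $H_{m-1}$ is indeed normal in $H_m$ (the pattern $J_{k_m,l_m}$ contains no position $(i,k_m)$ with $i<k_m$, so conjugating $h\in H_{m-1}$ by $I+ce_{k_m,l_m}$ leaves the $(k_m,l_m)$-entry equal to zero), and your lower-bound argument is correct and more elementary than what the paper does: the action of $H_m/H_{m-1}$ on the set of $H_{m-1}$-classes has orbits of size at most $q$, so $k(H_{m-1})\le q\,k(H_m)$.

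The upper bound, however, is a genuine gap. You correctly flag it as the main obstacle and then offer only heuristics (``lose one factor of $q$ from the index, one from centralizer comparison, one from a crude bound''). The coset-by-coset orbit count you sketch would require precisely the linear-algebraic control you defer to an unspecified ``computation over $\mathbb{F}_q$'', and nothing in your outline explains why the number of $H_m$-classes in the $q-1$ nontrivial cosets is bounded by any fixed power of $q$ times $k(H_{m-1})$. As written, the argument does not close. (Incidentally, since you have established $H_{m-1}\trianglelefteq H_m$ with abelian quotient of order $q$, the classical inequality $k(G)\le k(N)\,k(G/N)$ for $N\trianglelefteq G$, due to Nagao and Gallagher, would give $k(H_m)\le q\,k(H_{m-1})$ outright --- stronger than the paper's $q^3$. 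But you do not invoke it, and your sketch does not recover it.)

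The paper takes a completely different route that handles both inequalities at once and never appeals to normality. It couples the Burnside processes on $G=H_N$ and $H=H_{N-1}$ through a single row-by-row rejection sampler: draw $A$ uniformly from $G$, then build a uniform $B\in C_G(A)$ (version~1) or a uniform $\tilde B\in C_H(\phi(A))$ (version~2) by filling rows $n-1,\ldots,1$ subject to the commuting constraints. Burnside's lemma then writes each of $k(G)$ and $k(H)$ as $\mathbb{E}\bigl[q^{\text{(free-dimension exponent)}}\cdot\mathbb{P}(\text{accept}\mid A)\bigr]$, with the two exponents differing by at most $1$. The two versions diverge only at row~1, and a short dimension argument comparing $\dim\bigl(U_1/(U_1\cap W_1)\bigr)$ with $\dim\bigl(U_2/(U_2\cap W_2)\bigr)$ --- the spans of the relevant column vectors with and without the index $l=2$ --- shows the conditional acceptance probabilities lie within a multiplicative factor in $[q^{-2},q]$ of each other. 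Combining these two estimates yields $q^{-3}k(G)\le k(H)\le q\,k(G)$ in one stroke. Your group-theoretic decomposition is conceptually cleaner for the lower bound, but the paper's probabilistic coupling is what actually delivers the upper bound you are missing.
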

\begin{corollary}\label{Cor4.1}
For all $1\leq m\leq N$ and any prime power $q$, 
\begin{equation*}
     \sqrt{\mathrm{Var}(qK_m(T_m))} \leq q^2\cdot\mathbb{E}[qK_m(T_m)]=q^2\cdot\frac{k(H_{m-1})}{k(H_m)},
\end{equation*}
where $T_m$ is defined as in Proposition \ref{Pro4.1}.
\end{corollary}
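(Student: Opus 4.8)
The plan is to bound the variance by the second moment and then exploit two facts: the statistic $K_m$ is bounded above by $1$, and Theorem~\ref{Thm2} supplies a matching lower bound on $\mathbb{E}[qK_m(T_m)]$ (via Proposition~\ref{Pro4.1}).

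First I would record that $K_m(g)\le 1$ for every $g\in H_m$. By definition $K_m(g)=0$ unless $g\in H_{m-1}$; and when $g\in H_{m-1}$, every element of $H_{m-1}$ that commutes with $g$ lies in $H_m$ and commutes with $g$, so $C_{H_{m-1}}(g)\subseteq C_{H_m}(g)$ and hence $K_m(g)=|C_{H_{m-1}}(g)|/|C_{H_m}(g)|\le 1$. Consequently $0\le qK_m(T_m)\le q$ pointwise, so $(qK_m(T_m))^2\le q\cdot qK_m(T_m)$, and taking expectations gives $\mathbb{E}[(qK_m(T_m))^2]\le q\,\mathbb{E}[qK_m(T_m)]$. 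Since $\mathrm{Var}(qK_m(T_m))\le \mathbb{E}[(qK_m(T_m))^2]$, this already yields $\mathrm{Var}(qK_m(T_m))\le q\,\mathbb{E}[qK_m(T_m)]$.

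Next I would bring in Proposition~\ref{Pro4.1} and Theorem~\ref{Thm2}. By Proposition~\ref{Pro4.1}, $\mathbb{E}[qK_m(T_m)]=k(H_{m-1})/k(H_m)$, and by the upper bound in Theorem~\ref{Thm2}, $k(H_m)/k(H_{m-1})\le q^3$, so $\mathbb{E}[qK_m(T_m)]\ge q^{-3}$, i.e.\ $1\le q^3\,\mathbb{E}[qK_m(T_m)]$. Substituting into the variance bound gives
\[
\mathrm{Var}(qK_m(T_m))\le q\,\mathbb{E}[qK_m(T_m)]\le q\cdot q^{3}\big(\mathbb{E}[qK_m(T_m)]\big)^{2}=q^{4}\big(\mathbb{E}[qK_m(T_m)]\big)^{2},
\]
and taking square roots yields $\sqrt{\mathrm{Var}(qK_m(T_m))}\le q^{2}\,\mathbb{E}[qK_m(T_m)]=q^{2}\cdot k(H_{m-1})/k(H_m)$, as claimed.

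I do not expect a genuine obstacle here: all of the substance is packaged into Theorem~\ref{Thm2}, whose proof is deferred, so the corollary is essentially bookkeeping built on the elementary inequality $\mathrm{Var}\le$ (second moment). The only point that warrants a moment's care is the pointwise bound $K_m\le 1$ — that is, the observation that centralizers can only shrink when one passes from $H_m$ to the subgroup $H_{m-1}$ — and this is immediate from $H_{m-1}\subseteq H_m$.
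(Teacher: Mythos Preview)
Your proof is correct and is essentially the same as the paper's: both use $K_m\le 1$ to bound the second moment of $qK_m(T_m)$ by $q$ times its mean, then invoke Theorem~\ref{Thm2} (via Proposition~\ref{Pro4.1}) to get $\mathbb{E}[qK_m(T_m)]\ge q^{-3}$ and convert this into the stated bound. The only cosmetic difference is that the paper works with $K_m$ and multiplies by $q$ at the end, while you work with $qK_m$ throughout.
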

\begin{remark}
Note that for each $1\leq m\leq N$, $qK_m(T_m)$ is the statistic used in the importance sampling step for the pair of pattern groups $H_{m-1},H_m$. For the application here, $q$ is fixed and $n$ can be large. Corollary \ref{Cor4.1} thus implies that the standard deviation of the statistic $qK_m(T_m)$ is at most of the same order as the target ratio $\frac{k(H_{m-1})}{k(H_m)}$. Therefore, the variance for the importance sampling step of the counting algorithm is well-controlled. 
\end{remark}
\begin{proof}[Proof of Corollary \ref{Cor4.1} (based on Theorem \ref{Thm2})]
Note that for any $g\in H_{m-1}$, $|C_{H_{m-1}}(g)|\leq |C_{H_m}(g)|$. Hence $K_m(g)\in [0,1]$ for any $g\in H_m$. By Proposition \ref{Pro4.1},  
\begin{equation*}
     \mathbb{E}[(K_m(T_m))^2]\leq \mathbb{E}[K_m(T_m)]=\frac{k(H_{m-1})}{q k(H_m)}.
\end{equation*}
Hence by Theorem \ref{Thm2},
\begin{eqnarray*}
    \sqrt{\mathrm{Var}(qK_m(T_m))}&\leq& q\sqrt{\mathbb{E}[(K_m(T_m))^2]}\leq \sqrt{\frac{q k(H_{m-1})}{k(H_m)}}\nonumber\\
    &\leq& q^2\cdot\frac{k(H_{m-1})}{k(H_m)}=q^2\cdot\mathbb{E}[qK_m(T_m)].
\end{eqnarray*}
\end{proof}

The proof of Theorem \ref{Thm2} will be presented in Section \ref{Sect.4.5}. Interestingly, the proof of this deterministic result relies on a probabilistic interpretation of the Burnside process for sampling from conjugacy classes of pattern groups. 

\subsection{Numerical results}\label{Sect.4.4}

This subsection presents some numerical results based on the implementation of the counting algorithm. The code is available at \url{https://github.com/cyzhong17/burnside_counting}. Concretely, we take $q=2, 3$ and $n=1,2, \cdots, 32$, and implement this algorithm to estimate $k(U_n(\mathbb{F}_q))$. When estimating the ratio $\frac{k(H_{m-1})}{k(H_m)}$ based on (\ref{Eq4}), take $B_m=100000$ (the number of burn-in samples) and $N_m\in \{100000,200000,300000\}$ (the number of samples used for importance sampling). The starting state of the Burnside process is taken to be the identity matrix. 

For $q=2$, the estimated values of $\log_2(k(U_n(\mathbb{F}_2)))$ and $\log_2(k(U_n(\mathbb{F}_2)))\slash (n^2)$ are plotted in Figures \ref{Fig1} and \ref{Fig2} for the three choices of $N_m$. Similarly, for $q=3$, the estimated values of $\log_3(k(U_n(\mathbb{F}_3)))$ and $\log_3(k(U_n(\mathbb{F}_3)))\slash (n^2)$ are presented in Figures \ref{Fig3} and \ref{Fig4}. For reference, these plots indicate the true values of $\log_q(k(U_n(\mathbb{F}_q)))$ and $\log_q(k(U_n(\mathbb{F}_q)))\slash (n^2)$ for $n=1,2,\cdots,16$ and the corresponding $q\in\{2,3\}$ (based on \cite{pak2015higman,soffer2016combinatorics}). Observe that the estimated values for different $N_m$ match closely with each other, which implies that the chosen sample sizes are sufficient for giving reliable estimates. For $n=1,2,\cdots,16$, the estimated values are very close to the true values, which demonstrates the accuracy of our algorithm.

In Figures \ref{Fig2} and \ref{Fig4}, observe that $\log_q(k(U_n(\mathbb{F}_q)))\slash (n^2)$ for both $q\in\{2,3\}$ appears to approach $1\slash 12$ as $n$ increases. This provides numerical evidence for the conjecture that the lower bound in (\ref{Bound_Higman}) is sharp (\cite{soffer2016upper, soffer2016combinatorics}). A refinement of Higman's conjecture (\cite{vera2003conjugacy} and \cite[Section 10.4]{soffer2016combinatorics}) postulates that for every $n\in\mathbb{N}^{*}$, $k(U_n(\mathbb{F}_q))$ is a polynomial in $q$ of degree $\lfloor n(n+6)\slash 12\rfloor$. In light of this conjecture, the values
\begin{equation*}
    \frac{n(n+6)}{12 n^2}=\frac{n+6}{12 n}
\end{equation*}
are also indicated in Figures \ref{Fig2} and \ref{Fig4}.

\begin{figure}[!h]
\centering
\includegraphics[width=\textwidth]{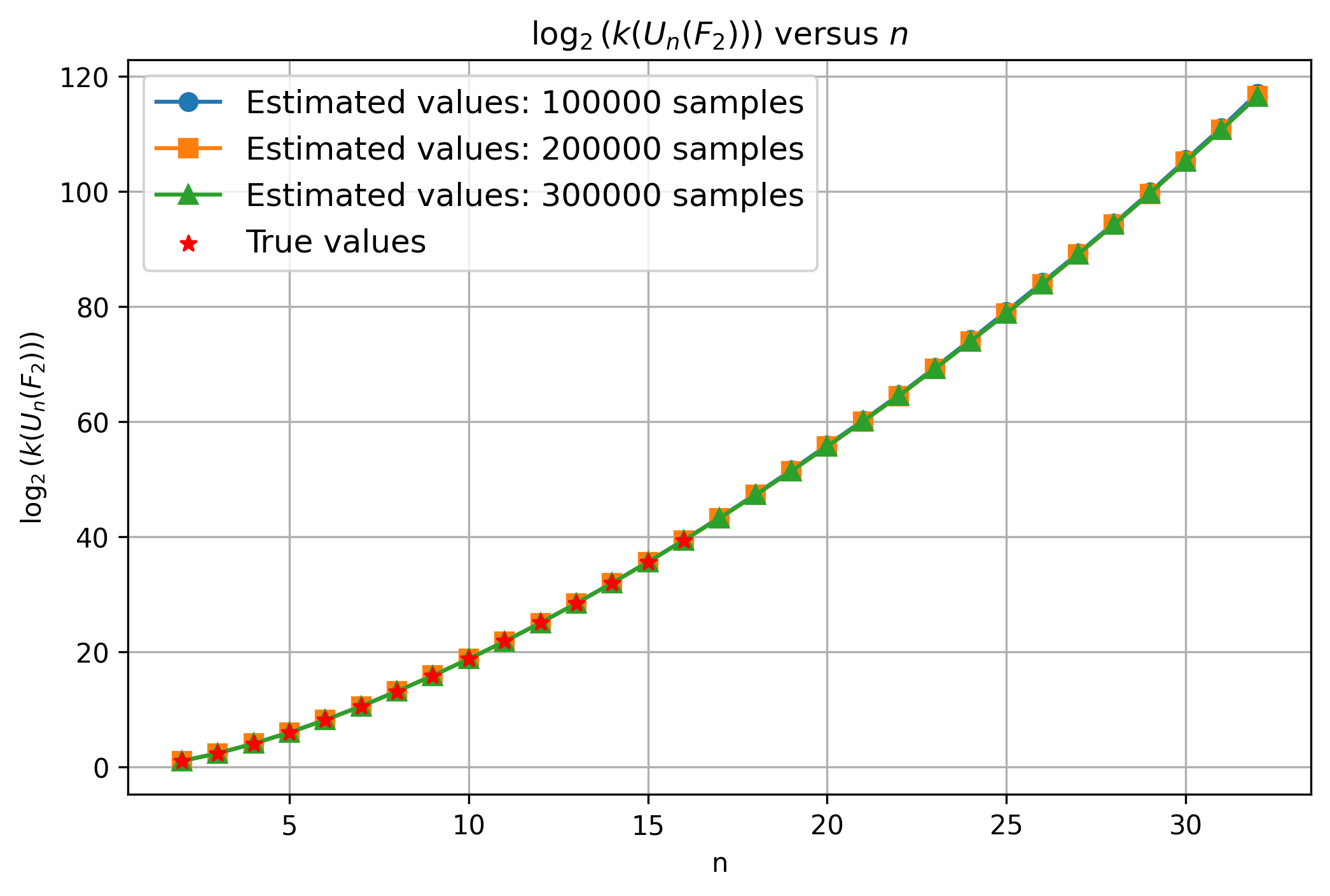}
\caption{Plot of $\log_2(k(U_n(\mathbb{F}_2)))$ for $n=1,2,\cdots,32$}
\label{Fig1}
\end{figure}

\begin{figure}[!h]
\centering
\includegraphics[width=\textwidth]{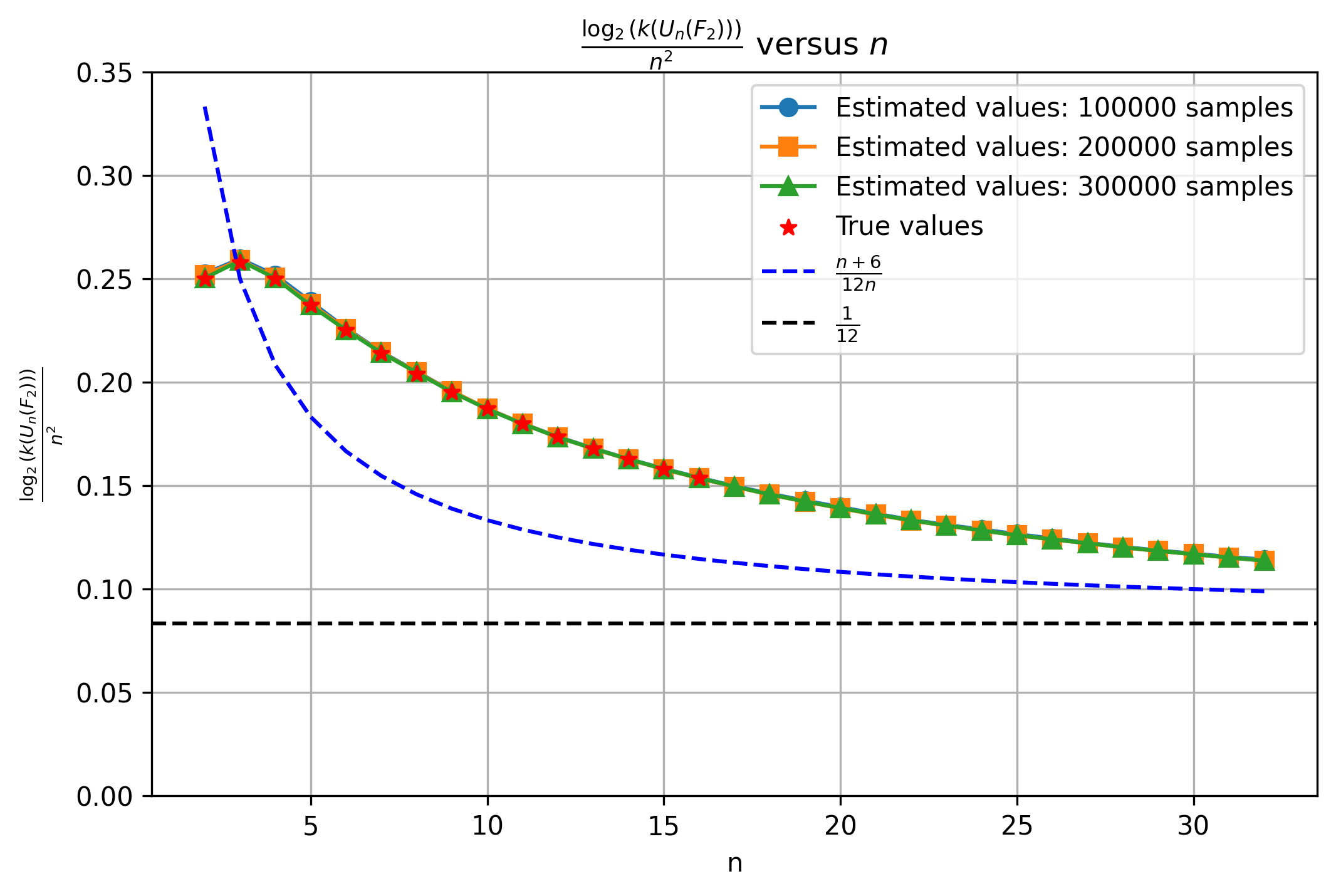}
\caption{Plot of $\log_2(k(U_n(\mathbb{F}_2)))\slash (n^2)$ for $n=1,2,\cdots,32$}
\label{Fig2}
\end{figure}

\begin{figure}[!h]
\centering
\includegraphics[width=\textwidth]{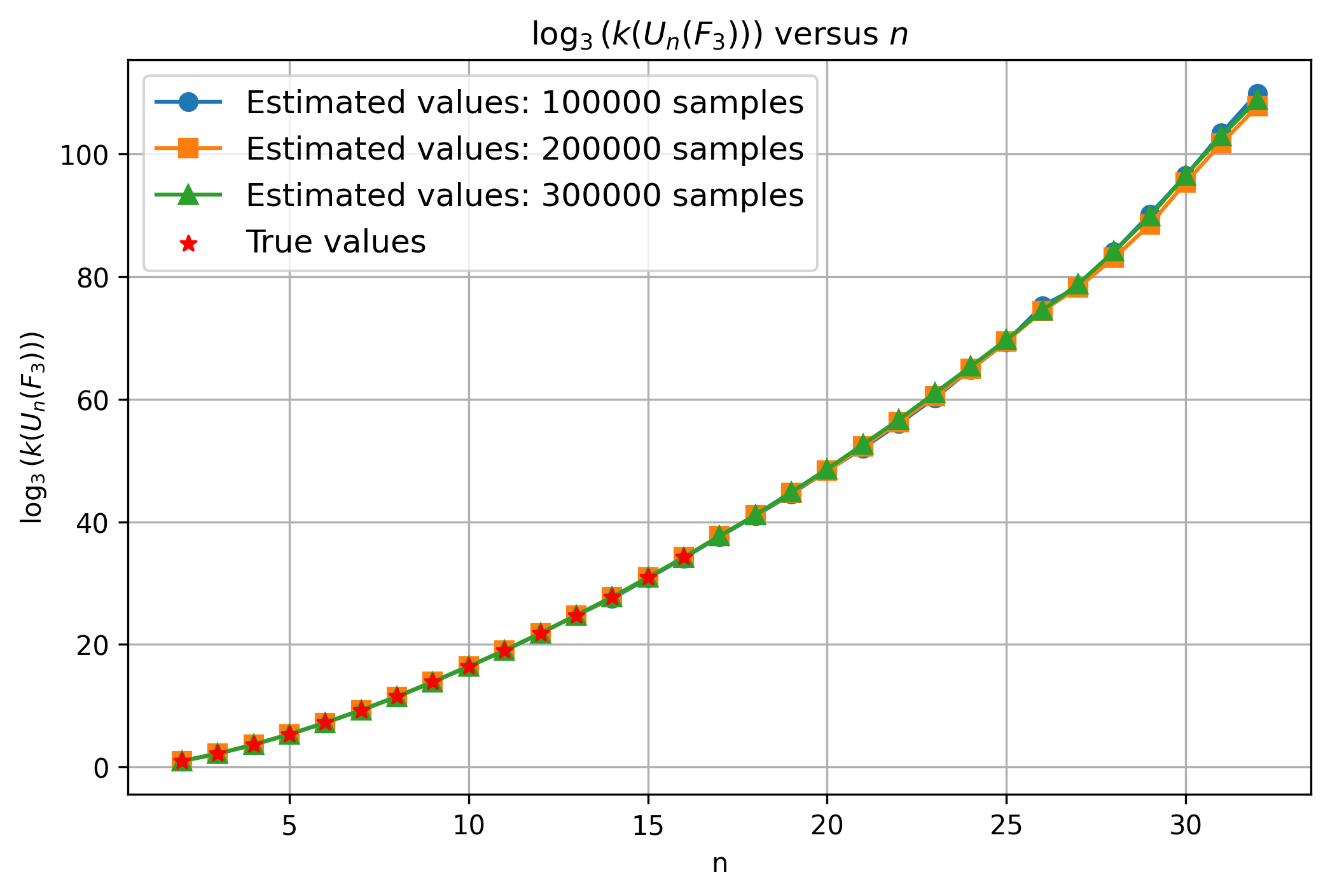}
\caption{Plot of $\log_3(k(U_n(\mathbb{F}_3)))$ for $n=1,2,\cdots,32$}
\label{Fig3}
\end{figure}

\begin{figure}[!h]
\centering
\includegraphics[width=\textwidth]{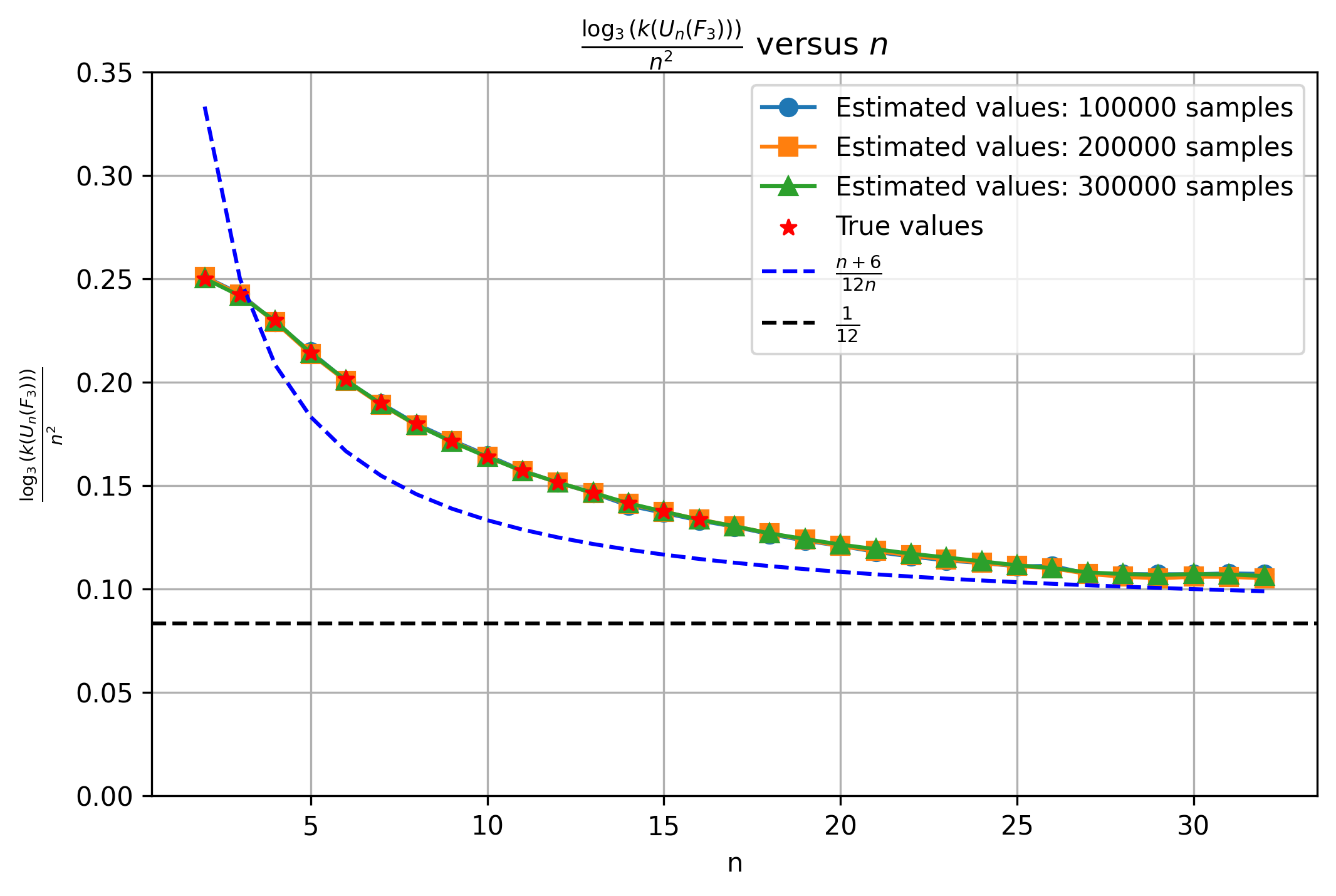}
\caption{Plot of $\log_3(k(U_n(\mathbb{F}_3)))\slash (n^2)$ for $n=1,2,\cdots,32$}
\label{Fig4}
\end{figure}

\subsection{Proof of Theorem \ref{Thm2}}\label{Sect.4.5}

This subsection gives the proof of Theorem \ref{Thm2} based on a probabilistic interpretation of the Burnside process for sampling from conjugacy classes of pattern groups (as described in Section \ref{Sect.4.1}). We focus on the case where $m=N$; the proof for other cases is similar and omitted. The result holds automatically for $n\leq 2$; henceforth, we assume that $n\geq 3$ throughout the remainder of the proof.

Below, take $G=H_N=U_n(\mathbb{F}_q)$ and $H=H_{N-1}$. By Burnside's lemma, 
\begin{equation}\label{Eq2}
    k(G)=\frac{1}{|G|}\sum_{g\in G}|C_{G}(g)|.
\end{equation}
Consider the following probabilistic interpretation of (\ref{Eq2}): let $A$ be sampled from $G$ uniformly at random, then 
\begin{equation}\label{BurnsideLemma}
    k(G)=\mathbb{E}[|\{X\in G: AX=XA\}|].
\end{equation}
Given $A$, the quantity $|\{X\in G: AX=XA\}|$ can be represented based on a probabilistic interpretation of the Burnside process. A similar interpretation can be provided for $k(H)$. 

\begin{proof}[Proof of Theorem \ref{Thm2}]

The proof proceeds by first introducing an algorithm that generates $A,B\in G$ and $\phi(A),\tilde{B}\in H$ such that $A$ is uniformly distributed on $G$, $\phi(A)$ is uniformly distributed on $H$, $B$ is uniformly distributed on the centralizer of $A$ in $G$, and $\tilde{B}$ is uniformly distributed on the centralizer of $\phi(A)$ in $H$. Then, based on the aforementioned probabilistic interpretation of Burnside's lemma, we represent both $k(G)$ and $k(H)$ using quantities involved in the algorithm. This representation allows us to compare $k(G)$ and $k(H)$ and reach the desired bound.

Let random matrices $A,B\in G$ and $\tilde{B}\in H$ be sampled via the algorithm to follow, where $B$ and $\tilde{B}$ only differ in the first row. Let $\phi(A)\in H$ be obtained from $A$ by setting the $(1,2)$ entry of $A$ to $0$. Represent $A,B,\tilde{B}$ as
\begin{equation*}
    A=\begin{bmatrix}
   1 & a_{1,2} & a_{1,3} & \cdots & a_{1,n-1} & a_{1,n}\\
    0& 1 & a_{2,3} & \cdots & a_{2,n-1} & a_{2,n}\\
  &&& \cdots & & \\
    0 & 0 & 0 & \cdots & 1 & a_{n-1,n}\\
    0 & 0 & 0 & \cdots & 0 & 1
    \end{bmatrix}, 
\end{equation*}
\begin{equation*}
    B=\begin{bmatrix}
   1 & b_{1,2} & b_{1,3} & \cdots & b_{1,n-1} & b_{1,n}\\
    0& 1 & b_{2,3} & \cdots & b_{2,n-1} & b_{2,n}\\
  & & &  \cdots & &  \\
    0 & 0 & 0 & \cdots & 1 & b_{n-1,n}\\
    0 & 0 & 0 & \cdots & 0 & 1
    \end{bmatrix},
\end{equation*}
\begin{equation*}
    \tilde{B}=\begin{bmatrix}
   1 & 0 & \tilde{b}_{1,3} & \cdots & \tilde{b}_{1,n-1} & \tilde{b}_{1,n}\\
    0& 1 & b_{2,3} & \cdots & b_{2,n-1} & b_{2,n}\\
  & & &  \cdots & &  \\
    0 & 0 & 0 & \cdots & 1 & b_{n-1,n}\\
    0 & 0 & 0 & \cdots & 0 & 1
    \end{bmatrix}.
\end{equation*}
Note that $AB=BA$ if and only if $\sum_{i<k<j} a_{i,k} b_{k,j}=\sum_{i<k<j} b_{i,k} a_{k,j}$ for all $1\leq i<j\leq n$. This condition is 
equivalent to requiring that for all $1\leq i\leq n-2$,
\begin{equation}\label{commutingcondition}
    \sum_{l=i+1}^{n-1} a_{i, l}\begin{bmatrix}
    0\\
    \cdots\\
    0\\
    b_{l,l+1}\\
    \cdots\\
    b_{l,n}
    \end{bmatrix}
  = \sum_{l=i+1}^{n-1} b_{i, l}\begin{bmatrix}
    0\\
    \cdots\\
    0\\
    a_{l,l+1}\\
    \cdots\\
    a_{l,n}
    \end{bmatrix}, 
\end{equation}
where both sides are in $\mathbb{F}_q^{n-i-1}$. Moreover, $\phi(A)\tilde{B}=\tilde{B}\phi(A)$ if and only if (\ref{commutingcondition}) holds for all $2\leq i\leq n-2$ and 
\begin{equation}\label{commutingcondition1}
     \sum_{l=3}^{n-1} a_{1, l}\begin{bmatrix}
    0\\
    \cdots\\
    0\\
    b_{l,l+1}\\
    \cdots\\
    b_{l,n}
    \end{bmatrix}
  = \sum_{l=3}^{n-1} \tilde{b}_{1, l}\begin{bmatrix}
    0\\
    \cdots\\
    0\\
    a_{l,l+1}\\
    \cdots\\
    a_{l,n}
    \end{bmatrix},
\end{equation}
where both sides are in $\mathbb{F}_q^{n-3}$. For every $1\leq i\leq n-2$, define
\begin{equation*}
    A_i:=\begin{bmatrix}
    a_{i+1,i+2} & a_{i+1,i+3} & \cdots & a_{i+1,n}\\
    0& a_{i+2,i+3} & \cdots & a_{i+2,n}\\
    &&\cdots & \\
    0 & 0 & \cdots & a_{n-1,n}
    \end{bmatrix},
\end{equation*}
\begin{equation*}
    B_i:=\begin{bmatrix}
    b_{i+1,i+2} & b_{i+1,i+3} & \cdots & b_{i+1,n}\\
    0& b_{i+2,i+3} & \cdots & b_{i+2,n}\\
    &&\cdots & \\
    0 & 0 & \cdots & b_{n-1,n}
    \end{bmatrix}.
\end{equation*}

The algorithm sequentially generates the $i$th rows of $A$ and $B$ for $i=n-1,\cdots,2,1$. For $i=n-1$, independently sample $a_{n-1,n}$ and $b_{n-1,n}$ from $\mathrm{Unif}(\mathbb{F}_q)$, the uniform distribution on $\mathbb{F}_q$. For $i=n-2,\cdots,2$, in updating the $i$th rows of $A$ and $B$ given $A_i$ and $B_i$ (assuming no rejection has occurred previously), sample $a_{i,i+1},\cdots,a_{i,n}$ i.i.d. from $\mathrm{Unif}(\mathbb{F}_q)$. If 
\begin{equation}\label{acceptance}
    \sum_{l=i+1}^{n-1} a_{i, l}\begin{bmatrix}
    0\\
    \cdots\\
    0\\
    b_{l,l+1}\\
    \cdots\\
    b_{l,n}
    \end{bmatrix}
   \in   \mathrm{Span}\left\{\begin{bmatrix}
    a_{i+1,i+2}\\
    a_{i+1,i+3}\\
    \cdots\\
    a_{i+1,n}
    \end{bmatrix},
    \cdots,
  \begin{bmatrix}
    0\\
    \cdots\\
    0\\
    a_{n-1,n}
    \end{bmatrix}
   \right\},
\end{equation}
the algorithm accepts and chooses $(b_{i,i+1},\cdots,b_{i,n})^{\top}$ uniformly from the elements in $\mathbb{F}_q^{n-i}$ that satisfy (\ref{commutingcondition}); otherwise it rejects. Note that in the former case, there are $q^{n-i-\mathrm{rank}(A_i)}$ possible choices. For $i=1$, assuming no rejection has occurred previously, first sample $a_{1,2},\cdots,a_{1,n}$ i.i.d. from $\mathrm{Unif}(\mathbb{F}_q)$. Then there are two versions: for version 1, if (\ref{acceptance}) holds with $i=1$, the algorithm accepts and chooses $(b_{1,2},\cdots,b_{1,n})^{\top}$ uniformly from the elements in $\mathbb{F}_q^{n-1}$ that satisfy (\ref{commutingcondition}) with $i=1$ (there are $q^{n-1-\mathrm{rank}(A_1)}$ possible choices); otherwise it rejects. For version $2$, if
\begin{equation}
    \sum_{l=3}^{n-1} a_{1, l}\begin{bmatrix}
    0\\
    \cdots\\
    0\\
    b_{l,l+1}\\
    \cdots\\
    b_{l,n}
    \end{bmatrix}
   \in \mathrm{Span}\left\{\begin{bmatrix}
    a_{3,4}\\
    a_{3,5}\\
    \cdots\\
    a_{3,n}
    \end{bmatrix},
    \cdots,
  \begin{bmatrix}
    0\\
    \cdots\\
    0\\
    a_{n-1,n}
    \end{bmatrix}
   \right\},
\end{equation}
the algorithm accepts and chooses $(\tilde{b}_{1,3},\cdots,\tilde{b}_{1,n})^{\top}$ uniformly from the elements in $\mathbb{F}_q^{n-2}$ that satisfy (\ref{commutingcondition1}) (there are $q^{n-2-\mathrm{rank}(A_2)}$ possible choices), independent of $(b_{1,2},\cdots,b_{1,n})^{\top}$; otherwise it rejects. Once the algorithm rejects, it samples the remaining entries of $A$ that are strictly above the diagonal i.i.d. from $\mathrm{Unif}(\mathbb{F}_q)$, and outputs $A$ without $B$.

Note that $A$ is uniformly distributed on $G$ and $\phi(A)$ is uniformly distributed on $H$. Moreover, conditional on $A$ and the event of no rejection for version 1, the distribution of $B$ is the same as that of the state obtained by running the Burnside process on $G$ for one iteration starting from $A$ (i.e., $B$ is uniformly distributed on the centralizer of $A$ in $G$). Similarly, conditional on $A$ and the event of no rejection for version 2, the distribution of $\tilde{B}$ is the same as that of the state obtained by running the Burnside process on $H$ for one iteration starting from $\phi(A)$ (i.e., $\tilde{B}$ is uniformly distributed on the centralizer of $\phi(A)$ in $H$).

Note that, according to the probabilistic interpretation of Burnside's lemma (see (\ref{BurnsideLemma})),
\begin{eqnarray*}
    && k(G)=\mathbb{E}[|\{X\in G: AX=XA\}|]\\
    &=&\mathbb{E}\big[q^{1+\sum_{i=1}^{n-2}(n-i-\mathrm{rank}(A_i))}\mathbb{P}(\text{no rejection for version 1}|A)\big],
\end{eqnarray*}
\begin{eqnarray*}
   && k(H)=\mathbb{E}[|\{X\in H: \phi(A)X=X\phi(A)\}|]\\
   &=&\mathbb{E}\big[q^{1+\sum_{i=2}^{n-2}(n-i-\mathrm{rank}(A_i))+(n-2-\mathrm{rank}(A_2))}\mathbb{P}(\text{no rejection for version 2}|A) \big]. 
\end{eqnarray*}
Let 
\begin{equation*}
    f(A_1):=1+\sum_{i=1}^{n-2}(n-i-\mathrm{rank}(A_i)),
\end{equation*}
\begin{equation*}
    g(A_1):=1+\sum_{i=2}^{n-2}(n-i-\mathrm{rank}(A_i))+(n-2-\mathrm{rank}(A_2)).
\end{equation*} 
Note that
\begin{equation}\label{Eq1.1}
    f(A_1)-1\leq g(A_1)\leq f(A_1).
\end{equation}
Denote by $\mathcal{Y}$ the set of $(n-2)\times (n-2)$ upper triangular matrices over $\mathbb{F}_q$. Then
\begin{eqnarray*}
k(G)&=& \sum_{Y\in\mathcal{Y}}q^{f(Y)}\mathbb{E}[\mathbbm{1}_{A_1=Y}\mathbb{P}(\text{no rejection for version 1}|A)]\\
&=& \sum_{Y\in\mathcal{Y}}q^{f(Y)}\mathbb{E}[\mathbb{P}(\{\text{no rejection for version 1}\}\cap\{A_1=Y\}|A)]\\
&=& \sum_{Y\in\mathcal{Y}}q^{f(Y)}\mathbb{E}[\mathbb{P}(\text{no rejection for version 1}|A_1,B_1)\mathbbm{1}_{A_1=Y}].
\end{eqnarray*}
Similarly,
\begin{eqnarray*}
k(H)&=& \sum_{Y\in \mathcal{Y}}q^{g(Y)}\mathbb{E}[\mathbbm{1}_{A_1=Y}\mathbb{P}(\text{no rejection for version 2}|A)]\\
&=& \sum_{Y\in\mathcal{Y}}q^{g(Y)}\mathbb{E}[\mathbb{P}(\text{no rejection for version 2}|A_1,B_1)\mathbbm{1}_{A_1=Y}].
\end{eqnarray*}

Before sampling the first rows of $A,B,\tilde{B}$, either rejection occurs for both versions or for neither version. In the following, we assume the latter case, and condition on $A_1,B_1$. Below, for each $2\leq i\leq n-1$, write
\begin{equation*}
    \eta_i'=(0,\cdots,0,a_{i,i+1},\cdots,a_{i,n})^{\top}\in\mathbb{R}^{n-2},
\end{equation*}
\begin{equation*}
    \eta_i=(0,\cdots,0,b_{i,i+1},\cdots,b_{i,n})^{\top}\in\mathbb{R}^{n-2}.
\end{equation*}
The event of no rejection for version 1 is equivalent to 
\begin{equation*}
    \sum_{l=2}^{n-1}a_{1,l}\eta_l\in \mathrm{Span}\{\eta_2',\cdots,\eta_{n-1}'\},
\end{equation*}
and the event of no rejection for version 2 is equivalent to 
\begin{equation*}
    \sum_{l=3}^{n-1}a_{1,l}\eta_l\in \mathrm{Span}\{\eta_3',\cdots,\eta_{n-1}'\}.
\end{equation*}

Let 
\begin{equation*}
    U_1=\mathrm{Span}\{\eta_2,\cdots,\eta_{n-1}\}, \quad U_2=\mathrm{Span}\{\eta_3,\cdots,\eta_{n-1}\};
\end{equation*}
\begin{equation*}
    W_1=\mathrm{Span}\{\eta_2',\cdots,\eta_{n-1}'\},\quad W_2=\mathrm{Span}\{\eta_3',\cdots,\eta_{n-1}'\}.
\end{equation*}
Now
\begin{equation*}
    \mathrm{dim}(U_1\slash U_1\cap W_1)=\mathrm{dim}(U_1)-\mathrm{dim}(U_1\cap W_1),
\end{equation*}
\begin{equation*}
    \mathrm{dim}(U_2\slash U_2\cap W_2)=\mathrm{dim}(U_2)-\mathrm{dim}(U_2\cap W_2).
\end{equation*}
Hence by the fact that $U_2\cap W_2\subseteq U_1\cap W_1$, 
\begin{equation}
    \mathrm{dim}(U_1\slash U_1\cap W_1)-\mathrm{dim}(U_2\slash U_2\cap W_2)\leq \mathrm{dim}(U_1)-\mathrm{dim}(U_2)\leq 1.
\end{equation}
To bound $\mathrm{dim}(U_1\cap W_1\slash U_2\cap W_1)$, suppose there are two linearly independent elements $\bar{u},\bar{v}\in U_1\cap W_1\slash U_2\cap W_1$. Let $u=a_1\eta_2+u_1$, $v=a_2\eta_2+u_2$, where $u_1,u_2\in U_2$. If $a_1=0$, then $u=u_1\in U_2\cap W_1$, a contradiction to the fact that $\bar{u},\bar{v}$ are linearly independent. Hence $a_1\neq 0$. Similarly $a_2\neq 0$. Now $a_2u-a_1v=a_2u_1-a_1u_2\in  U_2\cap W_1$, hence $a_2\bar{u}-a_1\bar{v}=\bar{0}$. This again leads to a contradiction. Therefore, $\mathrm{dim}(U_1\cap W_1\slash U_2\cap W_1)\leq 1$. A similar argument shows that $\mathrm{dim}(U_2\cap W_1\slash U_2\cap W_2)\leq 1$. Thus 
\begin{equation}
    \mathrm{dim}(U_1\slash U_1\cap W_1)-\mathrm{dim}(U_2\slash U_2\cap W_2)\geq \mathrm{dim}(U_2\cap W_2)-\mathrm{dim}(U_1\cap W_1)\geq -2.
\end{equation}
Now 
\begin{equation*}
    \mathbb{P}(\text{no rejection for version 1}|A_1,B_1)=q^{-\mathrm{dim}(U_1\slash U_1\cap W_1)},
\end{equation*}
\begin{equation*}
    \mathbb{P}(\text{no rejection for version 2}|A_1,B_1)=q^{-\mathrm{dim}(U_2\slash U_2\cap W_2)}.
\end{equation*}
Hence we conclude that
\begin{eqnarray*}
   &&q^{-2}\mathbb{P}(\text{no rejection for version 1}|A_1,B_1)\\
   &\leq& \mathbb{P}(\text{no rejection for version 2}|A_1,B_1)\\
   &\leq& 
   q\mathbb{P}(\text{no rejection for version 1}|A_1,B_1).
\end{eqnarray*}
Finally, from (\ref{Eq1.1}), it follows that
\begin{equation*}
 q^{-3}k(G)    \leq k(H) \leq q k(G).
\end{equation*}
\end{proof}

\section{Final remarks}\label{Sect.5}

The approach of this paper seems broadly useful for the many applications of the Burnside process: partitions \cite{diaconis2024poisson}, trees \cite{bartholdi2024algorithm}, and contingency tables \cite{dittmer2019counting, diaconis2022statistical} should all work well. 

There are many further approaches to estimating totals given a sample: capture-recapture \cite{pollock2000capture}, Good-Turing estimates for the number of species \cite{good1953population}, and Bayesian approaches \cite{bunge1993estimating, solow1994bayesian}. We hope to try these in future work.

An important theoretical problem is to find useful rates of convergence for the Burnside process in this application. For preliminary efforts, see \cite{diaconis2024poisson, zhong2024}.

\section{Acknowledgments and funding sources}

Acknowledgments: We thank Susan Holmes for helpful suggestions. We also thank Andrew Lin and Nathan Tung for helpful comments. 

\noindent Funding: This work was supported by the National Science Foundation [grant number 1954042].

\bibliographystyle{acm}
\bibliography{Burnside.bib}
\end{document}